\newcommand{\N}{\mathbb{N}}
\newcommand{\R}{\mathbb{R}}
\DeclareMathOperator{\im}{im \, }
\newcommand{\Real}{\mathbb{R}}
\newcommand{\rank}{\operatorname{rank}}
\newtheorem{proposition}{Proposition}[section]
\newtheorem{lemma}[proposition]{Lemma}
\newtheorem{conjecture}[proposition]{Conjecture}
\pgfplotsset{compat=1.17}
\begin{document}

\title{Regular linear time varying DAEs are equivalent to DAEs in strong standard canonical form}

\author[1]{Diana Est\'evez Schwarz}
\author[2]{Ren\'e Lamour}
\author[2]{Roswitha M\"arz}

\affil[1]{\small Berliner Hochschule f\"ur Technik}
\affil[2]{\small Humboldt University of Berlin, Institute of Mathematics}

\maketitle

\begin{abstract}
The relationship between solvability of linear diffential-algebraic equations (DAEs) and their transformability  into canonical forms  has been  investigated for more than forty years. 

After a comparative analysis of numerous DAE frameworks the notions regularity and almost regularity were established only recently.  Regular DAEs resulted to be equivalently transformable into so-called standard canonical forms (SCF) with block-structured nilpotent matrix functions featuring certain rank properties.

In this paper we prove that for regular DAEs,  even a transformation into a strong standard canonical form (SSCF) is possible, i.e. a SCF with a constant nilpotent matrix. We start from block-structured SCF and give a constructive proof.

\end{abstract}
\textbf{Keywords:} Differential-Algebraic Equation, Higher Index, Regularity, Structural Analysis, Persistent Structure, Canonical Characteristic Values
\medskip \\
\textbf{AMS Subject Classification:} 34A09, 34A12, 34A30

\setcounter{secnumdepth}{3}
\setcounter{tocdepth}{3}

\section{Introduction}\label{sec:Introduction}
We focus on linear differential algebraic equations (DAEs) in standard form,
\begin{align}\label{1.DAE}
 Ex'+Fx=q,
\end{align}
in which $E,F:\mathcal I\rightarrow \Real^{m\times m}$ are sufficiently smooth, at least continuous, matrix functions on the interval $\mathcal I\subseteq\Real$. We assume $\im \begin{bmatrix}
	E(t) & F(t)
\end{bmatrix} = \Real^m$  and a constant $r=\rank E(t)<m$ for all $t\in\mathcal I$ . 
\bigskip

Starting from results from \cite{HaMae2023}, in \cite{commonground2024} the equivalence of thirteen different characterizations of regular $\{E, F \}$ are given. 
The common ground of these 
statements is the existence of  the index $\mu$ and the canonical characteristics
\begin{align}\label{char}
 r<m,\; \theta_{0}\geq\cdots\geq\theta_{\mu-2}>\theta_{\mu-1}=0,\; d=r-\sum_{k=0}^{\mu-2}\theta_k,
\end{align}
that are constant, persist under equivalence transformations and can be computed using different index frameworks, cf.\ \cite{commonground2024}.
\medskip

A pair of the form
\begin{eqnarray*}
	\left\{ \begin{bmatrix}
	I_d & 0 \\
	0 & N
\end{bmatrix}, \begin{bmatrix}
	\Omega & 0 \\
	0 & I_{m-d}
\end{bmatrix} \right\} , \quad \Omega: \mathcal I\rightarrow\Real^{m-d\times m-d}, \quad N : \mathcal I\rightarrow\Real^{d\times d}, \quad d \in \N,
\end{eqnarray*}
where $N$ is pointwise nilpotent and lower or upper triangular, is said to be in standard canonical form (SCF), cf.\ \cite{CaPe83}\footnote{In \cite{CaPe83} it is emphasized that $N$ does not necessarily have constant rank.}$^{,}$\footnote{An algorithm to determine whether real analytic $\{E,F\}$ are transferable in SCF or not can be found in \cite{BergerIlchmann}.}. If in addition $N$ is constant, then the system is in strong standard canonical form (SSCF).

\medskip
In this paper, we  proof the following conjecture made in \cite{commonground2024}, obtaining a fourteenth equivalent characterization of regularity that means transformability into a strong standard canonical form.
We will see that the constant matrix $N$ is nilpotent with index $\mu$ and
\[
\theta_i=\rank N^{i+1} - \rank N^{i+2}, \quad i=0, \ldots, \mu-2, \quad d=r-\sum_{i=0}^{\mu-2} \theta_i.
\]
These canonical characteristics are particularly easy to recognize in nilpotent matrices in Jordan normal form, see Figure \ref{fig:PowersJ}.

\begin{conjecture} \cite{commonground2024}\label{conjecture} Let $E, F:\mathcal I\rightarrow\Real^{m\times m}$ be sufficiently smooth. If the pair $\{E,F\}$ is regular with index $\mu\geq2$ then it is transformable into strong standard canonical form and the Jordan normal form of the matrix $N$ is exactly made of
 \begin{align*}
  &m-r-\theta_0 \quad \text{Jordan blocks of order } 1,\\
  &\theta_0-\theta_1 \quad \text{Jordan blocks of order } 2,\\
  &\theta_1-\theta_2 \quad \text{Jordan blocks of order } 3,\quad \\
	&...\\
   &\theta_{\mu-3}-\theta_{\mu-2}\quad \text{Jordan blocks of order } \mu-1,\\
  &\theta_{\mu-2}\quad \text{Jordan blocks of order } \mu.
 \end{align*}
 \end{conjecture}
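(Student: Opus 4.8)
The plan is to start from the block-structured SCF guaranteed by the results of \cite{commonground2024} and \cite{HaMae2023}, in which the DAE is already equivalent to $\{\diag(I_d, N), \diag(\Omega, I_{m-d})\}$ with $N$ pointwise nilpotent of index $\mu$, lower (or upper) triangular, and with the prescribed rank drops $\theta_i = \rank N(t)^{i+1} - \rank N(t)^{i+2}$ constant in $t$. Since the $I_d$-block and the $\Omega$-block are untouched by any further transformation acting only on the nilpotent part, the whole problem reduces to the following purely local claim about the nilpotent subsystem: a matrix function $N:\mathcal I\to\Real^{d\times d}$ that is sufficiently smooth, pointwise nilpotent, triangular, and has constant ranks $\rank N^{k}(t)$ for each $k$, is equivalent (in the DAE sense, i.e.\ via $N \rightsquigarrow L^{-1}N K$ together with the compatible change for the identity block, which forces $K = (L^{-1}N K)' {}^{-1}\cdots$ — more precisely via the transformation pair that keeps $\{I,N\}$ of the same shape) to a \emph{constant} nilpotent matrix $N_0$ in Jordan normal form whose block sizes are exactly those listed in the conjecture.

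The key steps I would carry out are: (i) Translate the admissible DAE-equivalence on the pair $\{I_d, N\}$ into the concrete transformation law on $N$. For a pair $\{I,N\}$, equivalence transformations $Ex'+Fx=q \mapsto L(ECx' + (EC'+FC)x) = Lq$ with pointwise nonsingular $L,C$ preserve the first block exactly when $LC = I$ restricted appropriately; working this out shows the nilpotent block transforms as $N \mapsto L N L^{-1} + L L'{}^{-1}$-type rule, i.e.\ a \emph{gauge}/\emph{similarity-plus-derivative} action — this is the linear time-varying analogue of strict equivalence and is exactly the setting in which Jordan-type normal forms over the ring of smooth functions are obtained. (ii) Build the constant Jordan form by a filtration argument: consider the flag of smooth vector bundles $\mathcal I\times\Real^d \supseteq \ker N \supseteq \ker N^2 \supseteq \cdots \supseteq \ker N^{\mu} = \mathcal I\times\Real^d$ — wait, more usefully the increasing flag $0\subseteq \ker N \subseteq \ker N^2 \subseteq\cdots$. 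Constancy of the ranks $\rank N^k$ means each $\ker N^k$ is a smooth subbundle of constant dimension; choose smooth bases adapted to this flag (a smooth "Jordan chain" construction: pick a smooth complement of $\ker N^{\mu-1}$ in $\ker N^\mu$, push it down by $N$, extend, iterate). This yields a smooth basis in which $N$ is strictly triangular with $1$'s and $0$'s on the superdiagonal pattern of a direct sum of nilpotent Jordan blocks — but possibly with smooth nonzero entries remaining off the chain structure. (iii) Kill the remaining entries and, crucially, the derivative term $LL'{}^{-1}$: this is where one exploits that we are allowed a gauge action, not merely pointwise similarity. One solves, block by block up the flag, a sequence of linear ODEs (variation-of-constants / triangular linear systems) for the transformation $L(t)$ whose solvability is guaranteed precisely by the triangular nilpotent structure — each step requires integrating a first-order linear ODE on $\mathcal I$, which is always globally solvable. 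Finally (iv) count the Jordan blocks: the number of chains of length exactly $\ell$ equals $\dim(\ker N^\ell/\ker N^{\ell-1}) - \dim(\ker N^{\ell+1}/\ker N^{\ell})$, which in terms of $\rank N^k$ and hence of $r$, $d$, and the $\theta_i$ gives exactly $\theta_{\ell-2}-\theta_{\ell-1}$ blocks of order $\ell$ for $2\le \ell\le\mu$ and $m-r-\theta_0$ of order $1$, matching Figure \ref{fig:PowersJ}.

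The main obstacle I anticipate is step (iii), specifically simultaneously achieving constancy of $N$ \emph{and} respecting the coupling with the identity block of the SCF. Pointwise, getting a triangular nilpotent matrix into Jordan form by a smooth similarity is standard once the kernel flag has constant dimensions; the subtlety is that the permissible transformations for a DAE pair $\{I_d,N\}$ are constrained (the same $C$ must conjugate $N$ and interact correctly with the differentiation of the $I_d$-block), so one is not free to use an arbitrary smooth $GL_d$-valued similarity. I expect the resolution is that the admissible class is exactly rich enough: the derivative-affine term is what lets us "straighten" the time dependence, and the constant-rank hypotheses are exactly what make the required linear ODEs have no singularities on $\mathcal I$. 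A secondary technical point is ensuring the constructed basis is smooth (not merely continuous) globally on $\mathcal I$ when $E,F$ are only $C^k$ for finite $k$ — one must track how many derivatives are consumed, presumably bounded in terms of $\mu$, and confirm this matches the "sufficiently smooth" hypothesis already in force.
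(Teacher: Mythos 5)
Your overall strategy — reduce to the nilpotent block of the block-structured SCF, exploit constancy of the ranks $\rank N^k$, and count blocks via the kernel flag — agrees with the paper in its starting point and in step (iv) (your block count is correct). But there is a genuine gap at the crux, steps (i) and (iii). First, the transformation law you write down is the wrong one: for the nilpotent subsystem the pair is $\{N,I\}$ (the DAE is $Nx'+x=q$; also $N$ is $(m-d)\times(m-d)$, not $d\times d$), and preserving the identity in the second slot forces $L=(K+NK')^{-1}$, so the admissible action is
\begin{equation*}
N\ \longmapsto\ (K+NK')^{-1}NK ,
\end{equation*}
not a similarity-plus-derivative gauge action $N\mapsto LNL^{-1}+LL'{}^{-1}$; the latter is the law for the \emph{ordinary} block $\{I,\Omega\}$, which is exactly the setting where your ``integrate a linear ODE'' mechanism applies — and that block is left untouched here. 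Second, and consequently, step (iii) is not substantiated. If you first build a smooth Jordan-chain basis $T$ with $N=TJT^{-1}$ ($J$ constant), the condition for a transformation $K$ to produce the constant $J$ reads $NK=KJ+NK'J$: the derivative $K'$ appears multiplied by nilpotent factors on both sides, so this is itself a degenerate (differential-algebraic) equation, not an ODE one can integrate by variation of constants. Moreover, invertibility of $K+NK'$ is only guaranteed for special $K$ (e.g.\ when $K^{-1}NK'$ is strictly triangular, Lemma \ref{lemma_k_h_l}), so an arbitrary smooth similarity is not admissible.

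The missing idea is that no integration is needed at all: the paper resolves the residual derivative terms by a \emph{finite, purely algebraic} iteration. One first normalizes the secondary-diagonal blocks by smooth SVDs, then repeatedly applies the explicit transformation $K^{(k)}=N^{(k)}(N^{(E_c)})^{T}+(I-N^{(E_c)}(N^{(E_c)})^{T})$ of Lemma \ref{lem:construct_Kc}, for which $(K^{(k)})^{-1}N^{(k)}=N^{(E_c)}$. The error term $\bar N^{(k)}=-(K^{(k)})^{-1}N^{(k)}(K^{(k)})'$ then always appears premultiplied by the constant nilpotent shift $N^{(E_c)}$, so each step forces agreement with $N^{(E_c)}$ in an additional band of rows from the bottom up, and after $\mu-1$ steps the matrix is exactly the constant $N^{(E_c)}$, which is permutation-similar to the asserted Jordan form. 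To repair your proof you would need either to adopt this finite recursion in place of the ODE argument, or to show directly that the degenerate equation for $K$ above is solvable — which again comes down to a finite recursion powered by $N^{\mu}=0$, not to integrating along $\mathcal I$.
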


\begin{figure}
\includegraphics[width=\textwidth]{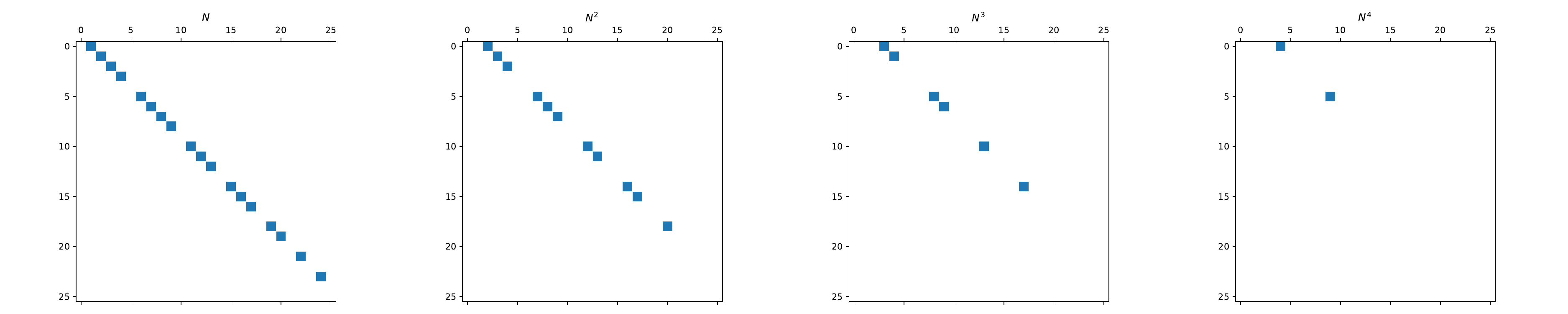}
\caption{Powers of $N$ with $N^5=0$ for $N$ consisting of $7$ Jordan blocks, $m=26$, $r=\rank N=18$, $d=0$, $\theta_0= 7,  \theta_1= 5,  \theta_2= 4,  \theta_3=2$. This corresponds to two blocks of order 5, two blocks of order 4, one block of order 3, two blocks order 2 and one block order 1. }
\label{fig:PowersJ}
\end{figure}

The paper is organized as follows. In Section \ref{sec:EquivRel} we deepen the equivalence concept for DAEs and pairs of matrix functions $\{E,F\}$ with two lemmas that are decisive for the following constructive proof.  In Section \ref{sec:Reg+BSSCF}
we introduce block-structured standard canonical forms and summarize the results from \cite{commonground2024} which describe their relationship with the canonical characteristics.
\medskip

For the proof of the conjecture for the nilpotent part of block-structured standard canonical forms two different variants are possible. We describe one variant in detail in Section \ref{sec:cols} and briefly outline the most important differences of the second in Section \ref{sec:rows}.
\medskip

In Section \ref{sec:sum} we summarize the results.

\section{Equivalence Relations}\label{sec:EquivRel}
Two pairs of matrix functions $\{E,F\}$ and $\{\tilde E,\tilde F\}$, and also the associated DAEs, are called \textit{equivalent}\footnote{In the context of the strangeness index \textit{globally equivalent}, e.g. \cite[Definition 2.1]{KuMe1996}, and \textit{analytically equivalent} in \cite[Section 2.4.22]{BCP89}. }, if there exist pointwise nonsingular, sufficiently smooth matrix functions $L, K:\mathcal I\rightarrow \Real^{m\times m}$, such that
\begin{align}\label{1.Equivalence}
 \tilde E=LEK,\quad \tilde F=LFK+LEK'.
\end{align}
An equivalence transformation 
 goes along with  the premultiplication of \eqref{1.DAE} by $L$ and the coordinate change $x=K\tilde x$ resulting in the further DAE $\tilde E\tilde x'+\tilde F\tilde x=Lq$.
Recall that \eqref{1.Equivalence} actually defines a reflexive, symmetric, and transitive equivalence relation $\{E,F\}\sim \{\tilde E,\tilde F\}$.
\medskip

We will use the following elementary results later on several times.

\begin{lemma}\label{lemma_k_inner}
For a pair of matrix functions $\{E,F\}$ and any differentiable matrix function $K:\mathcal I\rightarrow \Real^{m\times m}$
 that is pointwise nonsingular on $\mathcal I$ and leads to a pointwise nonsingular matrix function 
\[
G:=FK+EK'
\]
it holds
\[
\{E,F\} \quad \sim  \quad \{\hat{E},I\},
\]
whereas $I$ is the identity matrix, $L:=G^{-1}=(FK+EK')^{-1}$, and $\hat{E}:=L E K $.
\end{lemma}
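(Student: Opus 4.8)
The plan is to directly exhibit the pair $(L,K)$ that realizes the equivalence and then verify the two defining identities in \eqref{1.Equivalence} by a one-line computation; the candidates are already named in the statement, namely $K$ as given and $L := G^{-1} = (FK+EK')^{-1}$.

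First I would check that $L$ and $K$ are admissible transformation matrix functions, i.e.\ pointwise nonsingular and sufficiently smooth. For $K$ this is assumed. For $L$: by hypothesis $G = FK + EK'$ is pointwise nonsingular on $\mathcal I$, so $L = G^{-1}$ is well defined and pointwise nonsingular there; moreover, since $E,F$ are at least continuous and $K$ is differentiable, $G$ is at least continuous, and pointwise inversion of a continuous, pointwise nonsingular matrix function again yields a continuous matrix function (Cramer's rule), so $L$ carries the regularity required of an equivalence transformation, and in general $L$ is as smooth as $G$, hence as $E$, $F$ and $K'$, permit.

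Next I would verify the two identities. The first one is immediate, since $LEK = \hat E$ is simply the definition of $\hat E$. For the second, using the definition of $L$,
\[
LFK + LEK' = L\,(FK + EK') = L\,G = G^{-1}G = I .
\]
Hence $\{E,F\}\sim\{\hat E, I\}$, which is the assertion.

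There is essentially no hard step here: the lemma is really just the observation that $K \mapsto FK + EK'$ is precisely the transformation law for the $F$-component under \eqref{1.Equivalence}, so picking $L$ to be the pointwise inverse of that expression normalizes the $F$-component to the identity. The only point worth spelling out is the smoothness of $L = G^{-1}$, which is where the continuity (resp.\ higher smoothness) of $E, F$ and the differentiability of $K$ enter; this is exactly what the phrase "sufficiently smooth" in the ambient standing assumptions is meant to cover.
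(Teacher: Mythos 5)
Your proof is correct and follows essentially the same approach as the paper: both identify $(L,K)$ with $L=(FK+EK')^{-1}$ as the transformation pair and verify the defining identities of \eqref{1.Equivalence}, your one-line computation $LFK+LEK'=L(FK+EK')=G^{-1}G=I$ being exactly the content of the paper's change-of-variables motivation (the paper additionally spells out the reverse direction $L^{-1}\hat F K^{-1}+L^{-1}\hat E(K^{-1})'=F$, which adds nothing essential given the symmetry of the relation). Your remark on the smoothness of $L=G^{-1}$ is a sensible addition that the paper leaves implicit.
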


\begin{proof}
Let us first motivate the definition of $L$.
For any nonsingular $K$ and $y:=K^{-1}x$ we can reformulate \eqref{1.DAE} as follows
\begin{eqnarray*}
(E K)K^{-1}x'+Fx &=& (EK) ((K^{-1}x)'-(K^{-1})'x)+Fx= (EK)y'+(F-EK(K^{-1})')Ky\\
&=& (EK)y'+ (FK+EK')y,
\end{eqnarray*}
using $K(K^{-1})'K =- K'$. For $L: = (FK+EK')^{-1}$ we can therefore reformulate the DAE \eqref{1.DAE}  in terms of $y$ as
\begin{align*} 
 LEKy'+ y= Lq,
\end{align*}
i.e. 
\[
\hat E=LEK,\quad \hat F=I.
\]
It is also easy to verify directly that  $\{\hat E,I \}$ is equivalent to $\{ E, F\}$
\begin{eqnarray*}
 L^{-1} \hat E  K^{-1} &=& E,\\
 L^{-1} \hat F K^{-1}+ L^{-1} \hat E  (K^{-1})' &=& 
(FK+EK') I  K^{-1} +  E K (K^{-1})' \\
&=& F+EK'K^{-1} + EK(K^{-1})'=F.
\end{eqnarray*}
\end{proof}

\begin{lemma}\label{lemma_k_h_l}
For a matrix functions $E$  and any differentiable matrix function $K:\mathcal I\rightarrow \Real^{m\times m}$
 that is pointwise nonsingular and differentiable on $\mathcal I$  and leads to a pointwise nonsingular matrix function 
\[
H:=I+K^{-1}EK'
\]
it holds
\[
\{E, I \} \quad \sim \quad \{\hat E, I \}
\]
for  $L=H^{-1}K^{-1}=(I+K^{-1}EK')^{-1}K^{-1}=(K+EK')^{-1}$ and $\hat E = LEK$. Obviously, if $K^{-1}EK'$ is strictly upper or strictly lower triangular, then  $H$ is nonsingular by construction. 
\end{lemma}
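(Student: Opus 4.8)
The plan is to obtain this statement as an immediate specialization of Lemma~\ref{lemma_k_inner} applied to the pair $\{E,I\}$, i.e.\ with $F$ replaced by the identity matrix. First I would form the matrix function $G:=FK+EK'$ of Lemma~\ref{lemma_k_inner} for this choice of $F$, which here reads $G=K+EK'$. The key algebraic observation is the left factorization
\[
G=K+EK'=K\bigl(I+K^{-1}EK'\bigr)=KH,
\]
obtained simply by pulling $K$ out on the left. Since $K$ is pointwise nonsingular by hypothesis, $G$ is pointwise nonsingular precisely when $H$ is; thus the nonsingularity assumption on $H$ in the present statement is exactly the hypothesis needed to invoke Lemma~\ref{lemma_k_inner}.

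Next I would record $L=G^{-1}=(KH)^{-1}=H^{-1}K^{-1}$, which matches the three displayed expressions for $L$ in the statement once one observes $H^{-1}K^{-1}=(I+K^{-1}EK')^{-1}K^{-1}$ and $KH=K+EK'$. With $\hat E:=LEK$, Lemma~\ref{lemma_k_inner} then yields directly $\{E,I\}\sim\{\hat E,I\}$, the claimed equivalence; no further computation is needed, because Lemma~\ref{lemma_k_inner} already verified both directions of the equivalence relation \eqref{1.Equivalence}.

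For the concluding remark I would argue as follows: if $K^{-1}EK'$ is strictly upper or strictly lower triangular, then $H=I+K^{-1}EK'$ is triangular with all diagonal entries equal to $1$, hence $\det H\equiv 1$ and $H$ is pointwise nonsingular automatically, so the nonsingularity hypothesis on $H$ is satisfied by construction. I do not expect any real obstacle here: the statement is essentially a corollary of Lemma~\ref{lemma_k_inner}, and the only point requiring care is the bookkeeping — verifying that the $G$ of Lemma~\ref{lemma_k_inner} specialized to $F=I$ factors as $KH$ and that the several forms of $L$ coincide — after which the result follows at once.
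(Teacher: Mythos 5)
Your proposal is correct and follows exactly the paper's route: the paper's own proof likewise invokes Lemma~\ref{lemma_k_inner} with $F=I$ and the factorization $KH=K(I+K^{-1}EK')=K+EK'=L^{-1}$. Your additional remarks on the equivalence of nonsingularity of $G$ and $H$ and on the unit-diagonal triangular case are just explicit bookkeeping of what the paper leaves implicit.
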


\begin{proof}
The assertion follows directly from Lemma \ref{lemma_k_inner} and $KH=K(I+K^{-1}EK')=K+EK'=L^{-1}$.
\end{proof}

\section{ Regularity and block-structured
 standard canonical forms} \label{sec:Reg+BSSCF}
In \cite{commonground2024}, thirteen equivalent definitions for regularity of linear DAEs were given. Here, we focus only on those that involve block-structured
 standard canonical forms and recall that for given integers $\mu\geq 2,  \ell_1,\ldots,\ell_{\mu}$, 
we denote by $SUT=SUT(\mu,\ell_1,\ldots,\ell_{\mu})$ the set of all strictly upper triangular  matrix functions showing the block structure
\begin{align*}
N=\begin{bmatrix}
   0&N_{12}&*&\cdots&*\\
   &0&N_{23}&*&*\\
   &&\ddots&\ddots&\vdots\\
   &&&&N_{\mu-1, \mu}\\
   &&&&0
   \end{bmatrix}, \quad N_{ij}=(N)_{ij}:\mathcal I\rightarrow \Real^{\ell_i\times \ell_j},\quad N_{ij}=0 \quad\text{for}\quad i\geq j.
	\end{align*}
We refer to \cite{CRR,commonground2024} for the properties of $SUT$ and focus on
the following two subsets that are of special interest to describe crucial properties of DAEs.

\textbullet\; Supposing $\ell_1\geq \cdots\geq \ell_{\nu}$ we denote by $SUT_{column}\subset SUT$ the set  of all $N\in SUT$ having exclusively blocks $(N)_{i,i+1}$ with full column rank\footnote{This structure is figured out in the context of the tractability index, \cite{CRR}.}, that is 
\begin{align}\label{N.col}
 \rank (N)_{i,i+1}=\ell_{i+1},\quad i=1,\ldots,\nu-1.
\end{align}

\textbullet\; Supposing $\ell_1\leq \cdots\leq \ell_{\nu}$ we denote by $SUT_{row}\subset SUT$ the set  of all $N\in SUT$ having exclusively blocks $(N)_{i,i+1}$ with full row rank\footnote{This structure is figured out in the context of the strangeness index, \cite{KuMe1994,KuMe2024}.}, that is 
\begin{align}\label{N.row}
 \rank (N)_{i,i+1}=\ell_{i},\quad i=1,\ldots,\nu-1.
\end{align}

Both structures are visualized in Figures \ref{fig:Powers_cols} and \ref{fig:Powers_rows}.

According to \cite{commonground2024}, two equivalent characterizations of regularity are possible, among others:

\begin{itemize}
	\item 
A matrix pair $\{E,F\}$ is regular, iff matrix functions $N_c \in SUT_{columns}$ exist such that 
\[
\{E,F\} \ \sim \ \left\{ \begin{bmatrix}
	I & 0 \\
	0 & N_c
\end{bmatrix},\begin{bmatrix}
	\Omega & 0 \\
	0 & I
\end{bmatrix} \right\}.
\]
and the blocks of $N_c$ have the sizes
\[
\ell_1=m-r, \quad \ell_{i+1}=\theta_{i-2}, \quad i=1, \ldots, \mu-1.
\]
\item A matrix pair $\{E,F\}$ is regular, iff  matrix functions $N_r \in SUT_{rows}$ exist such that
\[
\{E,F\} \ \sim \  \left\{ \begin{bmatrix}
	I & 0 \\
	0 & N_r
\end{bmatrix}, \begin{bmatrix}
	\Omega & 0 \\
	0 & I
\end{bmatrix} \right\}.
\]
and the blocks of $N_c$ have the sizes
\[
\ell_{\mu}=m-r, \quad \ell_i=\theta_{\mu-i-1}, \quad i=1, \ldots, \mu-1.
\]

\end{itemize}
\begin{figure}
\includegraphics[width=0.9\textwidth]{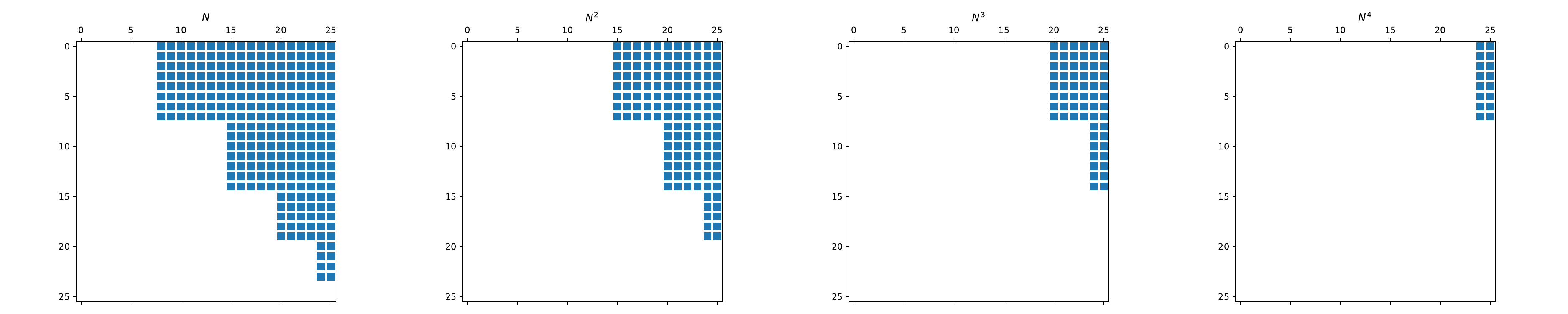}\\
\caption{Powers of $N$ with $N^5=0$ for $N$ having secondary blocks of full column rank, $m=26$,  $r=18$, $\theta_0= 7,  \theta_1= 5,  \theta_2= 4,  \theta_3=2$.}
\label{fig:Powers_cols}
\end{figure} 

\begin{figure}
\includegraphics[width=0.9\textwidth]{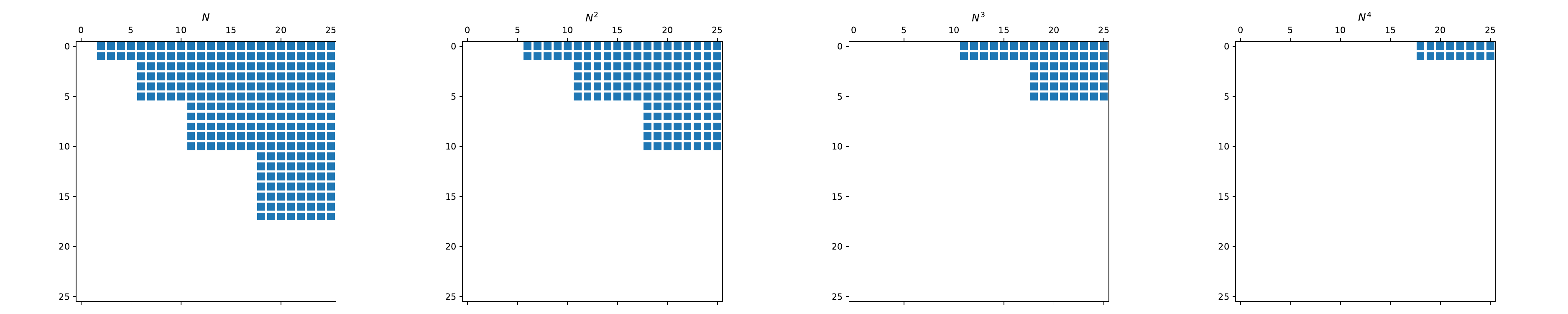}\\
\caption{Powers of $N$ with $N^5=0$ for $N$ having secondary blocks of full row rank, $m=26$,  $r=18$, $\theta_0= 7,  \theta_1= 5,  \theta_2= 4,  \theta_3=2$.}
\label{fig:Powers_rows}
\end{figure} 

For short, we write this as
\[
\{E,F\} \ \sim \ \left\{ \begin{bmatrix}
	I & 0 \\
	0 & N_c
\end{bmatrix}, \begin{bmatrix}
	\Omega & 0 \\
	0 & I
\end{bmatrix} \right\} \ \sim \ \left\{ \begin{bmatrix}
	I & 0 \\
	0 & N_r
\end{bmatrix}, \begin{bmatrix}
	\Omega & 0 \\
	0 & I
\end{bmatrix} \right\}.
\]
For the proof, we focus first on $\{N_c,I\}$ in Section \ref{sec:cols} and for completeness also on  $\{N_r,I\}$ in Section \ref{sec:rows}. In both cases we use equivalence transformations with matrices $L$, $K$, such that for the pure DAE part $\{N,I\}$ it holds
\[
\hat{N}= LNK, \quad LIK+LN(K)' =I , \quad \mbox{i.e.} \quad L=(K+NK')^{-1}=(I+K^{-1}NK')^{-1}K^{-1}
\]
and focus our attention on $N$ and $\hat{N}$ only. In all steps, $K^{-1}NK' \in SUP$ is given, implying invertible $K+NK'$, see Lemma \ref{lemma_k_h_l}. 
By a composed transformation, Conjecture \ref{conjecture} results straight forward.

\section{Proof with full-column rank assumption}\label{sec:cols}

If for integers $ \ell_{\mu} \leq \ldots \leq \ell_1$ we define elementary matrices from $SUT_{columns}$ by
\[
N^{(E_c)} =
	\begin{bmatrix}
   0&N^{(E_c)}_{12}&0&\cdots&0 \\
   &0&N^{(E_c)}_{23}& 0&0\\
   &&\ddots&\ddots&\vdots\\
   &&&&N^{(E_c)}_{\mu-1, \mu}\\
   &&&&0
   \end{bmatrix} \in \Real^{m\times m}
\]
with blocks
\begin{equation*}
N^{(E_c)}_{i,i+1} = \begin{bmatrix}
	I_{\ell_{i+1}} \\
	0
\end{bmatrix} \in \Real^{\ell_{i} \times \ell_{i+1}}. 
\end{equation*}
This structure is visualized in Figure \ref{fig:Powers_Ecols}.

\begin{figure}
\includegraphics[width=0.8\textwidth]{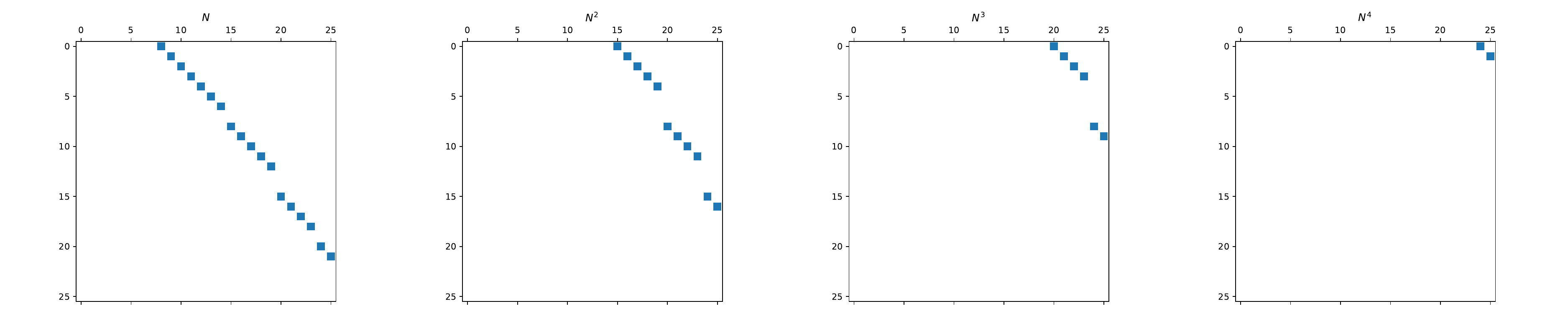}
\caption{Powers of $N=N^{(Ec)}$ with $N^5=0$ for $N$ with full column-rank secondary blocks, $m=26$,  $r=18$, $\theta_0= 7,  \theta_1= 5,  \theta_2= 4,  \theta_3=2$.}
\label{fig:Powers_Ecols}
\end{figure} 

\begin{proposition} \label{pro:N_col}
For every matrix function $N \in SUT_{columns}$ it holds

\[
\{N,I\} \quad \sim \quad \{N^{(E_c)},I \}.
\]
\end{proposition}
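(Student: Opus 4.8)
The plan is to proceed by induction on the number of block-rows $\mu$, peeling off one superdiagonal block at a time by an equivalence transformation of the form permitted by Lemma \ref{lemma_k_h_l}. The key observation is that for $N \in SUT_{columns}$ the first superdiagonal block $N_{12}$ has full column rank $\ell_2$, so there is a pointwise-nonsingular smooth $\ell_1 \times \ell_1$ matrix function $P_1$ with $P_1 N_{12} = N^{(E_c)}_{12} = \begin{bmatrix} I_{\ell_2} \\ 0\end{bmatrix}$ (take $P_1$ to put the columns of $N_{12}$ into reduced form; since $\rank N_{12}$ is constant this can be done smoothly, e.g. via a smooth factorization). One then transforms $N$ by a block-diagonal $K = \diag(P_1^{-1}, Q_2, I, \dots, I)$ together with suitable $L$ as dictated by Lemma \ref{lemma_k_h_l}; because $K$ is block-diagonal and constant-in-$t$ on the blocks we either choose it we can arrange $K^{-1} N K'$ to be strictly upper triangular (indeed $K' $ only has nonzero entries where $P_1, Q_2$ vary, and multiplying by the strictly-upper $N$ shifts these into the strict upper part), so $H = I + K^{-1}NK'$ is automatically nonsingular and the transformation is legitimate.

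The main technical content is bookkeeping: after normalizing $N_{12}$ to $N^{(E_c)}_{12}$ one must check that the remaining superdiagonal blocks $N_{23}, \dots, N_{\mu-1,\mu}$ of the transformed matrix still have full column rank (so the induction hypothesis applies to the trailing $(\mu-1)$-block principal submatrix), and that the off-diagonal fill-in blocks (the $*$ entries, and any new ones produced) can be cleared without destroying what has already been normalized. For the fill-in one uses further shear transformations $K = I + S$ with $S$ strictly upper triangular supported on a single off-diagonal block; conjugating $N$ by such a $K$ changes $N$ to $N - NS + SN + (\text{higher order, but } S^2=0 \text{ on a single block})$ modulo the $K'$ terms, and one chooses $S$ block by block, from the superdiagonal outward, to annihilate the $*$ blocks one diagonal at a time. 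Each such step again has $K^{-1}NK' = (I-S)N(S')$ strictly upper triangular, so Lemma \ref{lemma_k_h_l} applies.

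Concretely I would organize the proof as: (i) state that constant-rank smooth matrix functions admit smooth one-sided inverses / column-compression, so that the normalization of $N_{12}$ is smooth; (ii) perform the normalization of the first superdiagonal block and verify the structure of the result, in particular that the trailing submatrix lies again in $SUT_{columns}$ with the correct (shifted) block sizes; (iii) invoke the inductive hypothesis on that trailing submatrix, lifted to act as a block-diagonal transformation on all of $N$, which is legitimate because block-diagonal $K$ keeps $K^{-1}NK'$ strictly upper triangular; (iv) clean up the remaining $*$ fill-in above the superdiagonal by a finite sequence of single-block shear transformations, proceeding from the diagonal closest to the superdiagonal outward so that no earlier normalization is disturbed; (v) compose all these equivalences using transitivity of $\sim$ and the fact (from Lemma \ref{lemma_k_h_l}) that each step yields exactly the $L = (K+NK')^{-1}$ prescribed in Section \ref{sec:cols}.

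**The hardest part** will be step (iv): ensuring the order in which the $*$ blocks are eliminated is such that clearing a block on one superdiagonal level does not reintroduce nonzero entries into blocks already set to their canonical (zero or identity) values — this requires a careful ordering argument (eliminate the $*$ blocks adjacent to the superdiagonal first, or last, and track how $SN$, $NS$, and the $K'$-terms propagate) and a verification that the constant-rank property of the superdiagonal blocks is preserved throughout. A secondary subtlety is the smoothness of all transformation matrices; this is handled once and for all by the constant-rank smooth-factorization remark in step (i), but it must be stated explicitly since the matrix functions are only assumed sufficiently smooth rather than real-analytic.
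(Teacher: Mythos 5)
Your strategy (induction on the number of block rows, normalizing one superdiagonal block at a time and then sweeping away the fill-in with shears) is genuinely different from the paper's, which after the smooth-SVD normalization never argues block by block: it uses the single global factor $K^{(k)}=N^{(k)}(N^{(E_c)})^T+(I-N^{(E_c)}(N^{(E_c)})^T)$, for which $(K^{(k)})^{-1}N^{(k)}=N^{(E_c)}$ holds identically (Lemma \ref{lem:construct_Kc}), and tracks one invariant, namely that the number of bottom rows of $N^{(k)}$ already agreeing with $N^{(E_c)}$ grows by $\ell_{\mu-k}$ per step. That device absorbs your steps (iii) and (iv) simultaneously. Your plan can in principle be carried out, but as written it has a genuine gap precisely at the step you flag as hardest. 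If $S$ is supported on the single block position $(i,j)$ that you wish to annihilate, then $(I+S)^{-1}N(I+S)-N=NS-SN-SNS$ has \emph{no} component at position $(i,j)$ at all: $NS$ is supported at positions $(k,j)$ with $k\le i-1$ and $SN$ at positions $(i,l)$ with $l\ge j+1$. So a shear sitting at the target block cannot clear it. The correct mechanism places $S$ at the adjacent position $(i,j-1)$ (or $(i+1,j)$) and exploits the already-normalized superdiagonal: since $N_{j-1,j}=N^{(E_c)}_{j-1,j}$ consists of $I_{\ell_j}$ over a zero block, the product $S_{i,j-1}N_{j-1,j}$ is exactly the first $\ell_j$ columns of $S_{i,j-1}$ and can be prescribed arbitrarily. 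One must then check that the resulting fill-in (including the extra terms coming from the factor $H^{-1}=(I+K^{-1}NK')^{-1}$ of Lemma \ref{lemma_k_h_l}) lands only at $(i-1,j-1)$ on the same diagonal and on diagonals farther from the main diagonal, which fixes the sweep order. None of this is in your sketch, and your sign convention $N-NS+SN$ corresponds to $KNK^{-1}$ rather than $K^{-1}NK$.

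A second, smaller gap is the interaction of your steps (ii) and (iii). Lifting the inductive transformation $\tilde K$ of the trailing pair to $K=\diag(I_{\ell_1},\tilde K)$ destroys the normalization of $N_{12}$ achieved in (ii), so the order should be reversed; more importantly, the lifted $\hat N$ is only guaranteed to remain strictly block upper triangular if $\tilde K$ (and $\tilde L$) are themselves block upper triangular, since otherwise $(NK)_{22}=\sum_{j\ge3}N_{2j}\tilde K_{j2}$ need not vanish. You must therefore strengthen the induction hypothesis to assert that the transformations can be chosen block upper triangular, as the paper's explicit $K^{(k)}$ are. Your step (i) (smooth full-column-rank normalization, matching the paper's SVD-based Step~0) and the repeated appeal to Lemma \ref{lemma_k_h_l} via $K^{-1}NK'\in SUT$ are sound.
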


We want to emphasize already now that this proposition means the matrix pair $\{N,I \} $ for the matrix functions $N(t)$ is equivalent to a constant matrix pair in the sense of \eqref{1.Equivalence}. Therefore, it is equivalent to a pair where the constant nilpotent matrix can be assumed to have Jordan form.

\begin{proof}
For the proof, first we have to make a rearrangement in an initial step
\[
\{N,I\} \quad \sim \quad \{N^{(0)},I\}
\]
and perform then  a finite iteration
\[
  \{N^{(0)},I \}  \quad \sim \quad \{N^{(1)},I \}  \quad \sim \quad \ldots \quad \sim \quad \{N^{(\mu-1 )},I \}  \quad =    \quad \{N^{(E_c)},I\} \quad.
\] 
These steps, which use Lemma \ref{lemma_k_h_l}, are described in detail in the subsections below and  result in the assertion. The matrices $H$ from Lemma \ref{lemma_k_h_l} will be nonsingular by construction.
\end{proof}

\begin{figure}[htbp]
    \begin{minipage}[b]{\textwidth}
        \includegraphics[width=\textwidth]{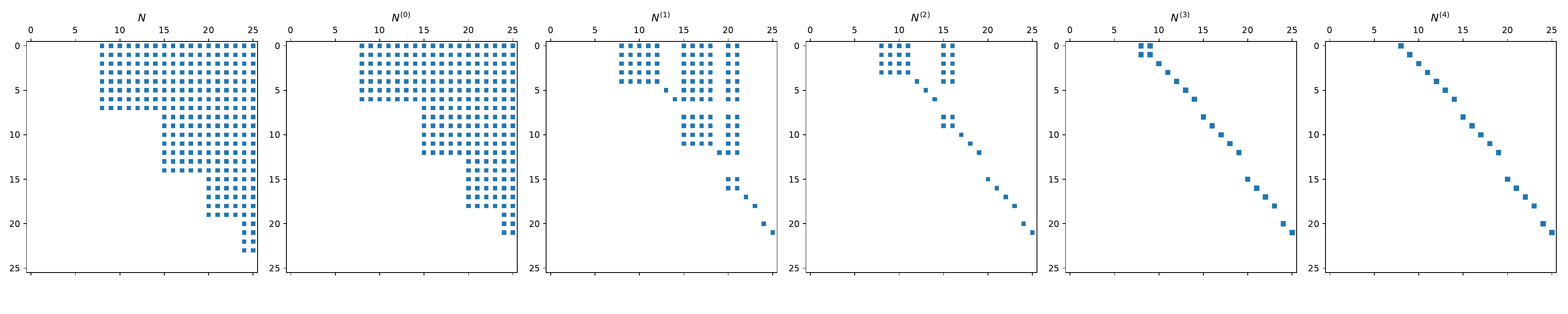}
        \caption{Visualization of  $N^{(k)}=L^{(k-1)}N^{(k-1)}K^{(k-1)}$ for $ \mu$ steps starting from $N$ (left) and ending with $N^{(Ec)}$ for  $N'\equiv 0$.}
        \label{fig:Step_dot_zero}
    \end{minipage}\\
		\begin{minipage}[b]{\textwidth}
        \includegraphics[width=\textwidth]{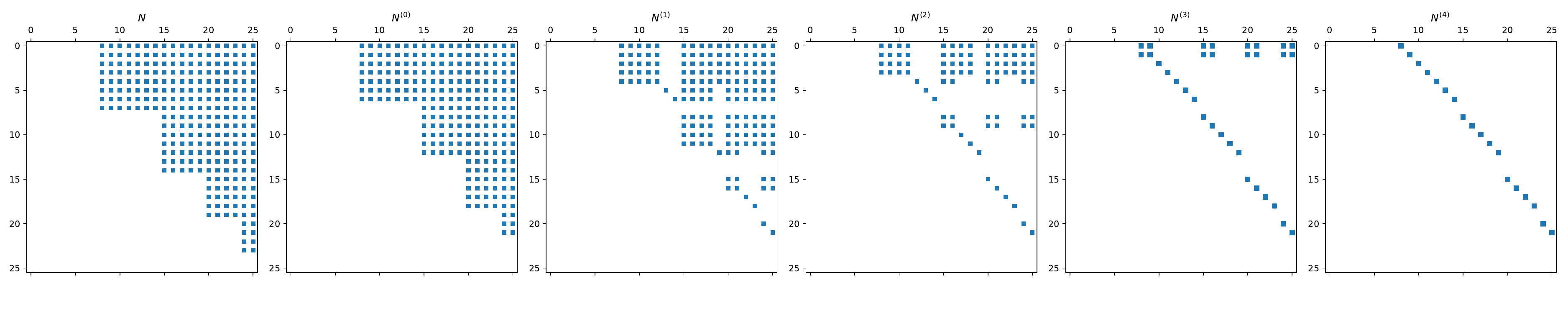}
        \caption{Visualization of  $N^{(k)}=L^{(k-1)}N^{(k-1)}K^{(k-1)}$ for $ \mu$ steps starting from $N$ (left) and ending with $N^{(Ec)}$ for $N'\neq 0$.}
        \label{fig:Steps_dot_not_zero}
    \end{minipage}
\end{figure}

\subsection{ \textbf{Step 0:} Obtaining quadratic nonsingular blocks by smooth SVDs}

The aim of this step is to construct a matrix function $N^{(0)} \in SUT_{columns}$ such that 
	\[
 \{N,I\} \quad \sim \quad \{N^{(0)},I \} \quad \mbox{and} \quad 	N^{(0)}_{i,i+1} =\begin{bmatrix}
		R_{i+1,i+1}\\
		0
	\end{bmatrix}:\mathcal I\rightarrow \Real^{\ell_{i+1}\times \ell_i }
	\]
	for nonsingular matrix functions $R_{i+1,i+1}$, $i=2, \ldots, \mu-1$ . \\

Using the singular value decomposition we define
\begin{itemize}
	\item $N_{i,i+1}  = U_{i} \Sigma_i V_{i}^T$. For the existence of such a decomposition we refer to e.g.\  \cite[Theorem 3.1.9]{KuMe2024}.
	\item $B_U$: Block diagonal matrix function containing $U_1, \ldots, U_{\mu-1}, I_{\ell_{\mu}}$ with $B_U^{-1}=B_U^T$.

\end{itemize}
Now we want to use Lemma \ref{lemma_k_h_l}. For a better understanding, let's retrace its main step here.
Multiplying 
$
N x' + x=q
$
with $B_U^T$ from the left leads to
\[
B_U^T N x' + B_U^Tx=B_U^Tq.
\]
For $y=B_U^Tx$, ($x=B_Uy$),  $H:=I+ B_U^T N (B_U)'$ we can reformulate this equation as:
\begin{eqnarray*}
B_U^T N (B_Uy)' + y&=&B_U^T N (B_U)'y + B_U^T N B_Uy' + y\\
&=& B_U^T N B_Uy' + (I+ B_U^T N (B_U)') y\\
&=& B_U^T N B_Uy' + H y.
\end{eqnarray*}
Since $B_U^T N (B_U)' \in SUT$, by construction $H$ from Lemma \ref{lemma_k_h_l} is nonsingular, and we can rewrite the DAE as
\[
H^{-1} B_U^T N B_Uy' +  y= H^{-1}B_Uq,
\]
i.e. for $\hat{N}=H^{-1} B_U^T N B_U$ it holds $\{N,I\} \sim \{\hat{N},I\}$ for $K=B_U$, $L=H^{-1} B_U^T$.

Due to the block-structure of $B_U$, it holds $\hat{N} \in  SUT_{columns}$, and
	\[
(\hat{N})_{i,i+1} = (B_U^T N B_U)_{i,i+1} = U_i^T U_{i} \Sigma_i V_{i}^T U_{i+1} = \Sigma_i V_{i}^T U_{i+1} =:\begin{bmatrix}
	R_{i+1,i+1} \\
	0
\end{bmatrix}
\]
for a nonsingular  matrix function $R_{i+1,i+1}$.

Setting $N^{(0)}:=\hat{N}\in SUT_{columns}$ we obtain the required pair
\[
\{ N^{(0)}, I \} \sim \{ N, I \}.
\]
Note further that by definition 
\begin{itemize}
	\item in $N$ the last $m-\sum_{i=1}^{\mu-1}\ell_i$ rows have to be zero,
	\item in $N^{(0)}$ the last $m-\sum_{i=1}^{\mu-1}\ell_i + (\ell_{\mu-1}-\ell_{\mu})$ rows have to be zero.
\end{itemize}

\subsection{ \textbf{Step 1:} Obtaining $N^{(E_c)}$  } \label{sec:Step1_col}
The aim of this step is  to construct $N^{(E_c)} \in SUT_{columns}$ by a finite iteration such that  
\[
 \quad \{N^{(0)},I \}  \quad \sim \quad \{N^{(1)},I \}  \quad \sim \quad \ldots \quad \sim \quad \{N^{(\mu-1 )},I \}  \quad =    \quad \{N^{(E_c)},I\}.
\]

For our purposes, we will use the following lemma several times.
\begin{lemma}\label{lem:construct_Kc}
	For any $N \in SUT_{columns}$, a corresponding $N^{(Ec)}\in SUT_{columns}$ and 
	\[
	K:=N(N^{(Ec)})^T + (I-N^{(Ec)}(N^{(Ec)})^T) 
	\]
	it holds
	\[
	K(N^{(Ec)})=N.
	\]
\end{lemma}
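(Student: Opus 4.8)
The plan is to verify the identity $K N^{(Ec)} = N$ directly by computing the right-hand side and showing all the extra terms vanish because of the structural properties of $N^{(Ec)}$. Write $P := N^{(Ec)} (N^{(Ec)})^T$ and note that $K = N P + (I - P)$, so that $K N^{(Ec)} = N P N^{(Ec)} + N^{(Ec)} - P N^{(Ec)}$. The whole computation reduces to two block-algebra facts about the elementary matrix $N^{(Ec)}$: first, $P N^{(Ec)} = N^{(Ec)}$, i.e. $N^{(Ec)}(N^{(Ec)})^T N^{(Ec)} = N^{(Ec)}$; and second, $N P N^{(Ec)} = N N^{(Ec)}$, which, given the first fact, is automatic. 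Combining these gives $K N^{(Ec)} = N N^{(Ec)} + N^{(Ec)} - N^{(Ec)} = N N^{(Ec)}$, and then the last missing piece is that $N N^{(Ec)}$ is not literally what we want — so the argument actually has to be organized blockwise, matching the $(i,i+1)$ block of $N$.

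More concretely, I would argue block by block. The matrix $N^{(Ec)}$ has as its only nonzero blocks $N^{(Ec)}_{i,i+1} = \begin{bmatrix} I_{\ell_{i+1}} \\ 0 \end{bmatrix} \in \Real^{\ell_i \times \ell_{i+1}}$, so $(N^{(Ec)})^T$ has only the blocks in position $(i+1,i)$ equal to $\begin{bmatrix} I_{\ell_{i+1}} & 0 \end{bmatrix}$. Hence the product $P = N^{(Ec)}(N^{(Ec)})^T$ is block diagonal, with $i$-th diagonal block equal to $\begin{bmatrix} I_{\ell_{i+1}} & 0 \\ 0 & 0 \end{bmatrix} \in \Real^{\ell_i \times \ell_i}$ (for $i = 1, \ldots, \mu-1$) and the last diagonal block zero; in other words $P$ is the orthogonal projector onto the span of the first $\ell_{i+1}$ coordinates within block $i$. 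Then $I - P$ is the complementary projector. Using $N \in SUT_{columns}$, whose $(i,i+1)$ block has full column rank $\ell_{i+1}$ and which is supported on the superdiagonal of blocks only modulo the strictly-upper $*$ entries, one checks that $(N P)_{i,i+1}$ equals $N_{i,i+1}$ composed with the projector selecting the first $\ell_{i+1}$ columns of block $i+1$ — but block $i+1$ of $N^{(Ec)}$ already lives in exactly those coordinates, so $(N P N^{(Ec)})_{i,i+1} = N_{i,i+1}$. Meanwhile $((I-P) N^{(Ec)})_{i,i+1} = (I - P)_{ii} N^{(Ec)}_{i,i+1} = 0$ since $N^{(Ec)}_{i,i+1}$ lies in the range of $P$. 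Adding the contributions gives $(K N^{(Ec)})_{i,i+1} = N_{i,i+1}$, and all other blocks of $K N^{(Ec)}$ vanish because $N^{(Ec)}$ only has superdiagonal blocks and $K$ is block upper triangular, so $K N^{(Ec)}$ can only have blocks in positions $(i,j)$ with $j \geq i+1$, and a short bookkeeping on the block indices shows positions strictly above the superdiagonal also come out zero.

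The one genuinely delicate point — and the step I expect to be the main obstacle — is the claim that the $*$-entries of $N$ (the blocks $N_{ij}$ with $j > i+1$) do not interfere: one must check that $N P N^{(Ec)}$ and $(I-P)N^{(Ec)}$ pick up no contribution from those far-off-diagonal blocks. This works because $N^{(Ec)}$ has nonzero blocks only in positions $(k,k+1)$, so in the product $N N^{(Ec)}$ the block in position $(i,k+1)$ is $N_{ik} N^{(Ec)}_{k,k+1}$ summed over $k$, and after composing with the projectors only the $k = i+1$ term survives in the relevant slot; but one has to also confirm that the terms with $k \neq i+1$ land in positions that are required to be zero in $N$, which forces using the precise block-size bookkeeping $\ell_1 \geq \cdots \geq \ell_\mu$ and the full-column-rank structure. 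I would lay this out as: (1) describe $(N^{(Ec)})^T$ and $P$ explicitly in block form; (2) show $P N^{(Ec)} = N^{(Ec)}$ and hence $(I - P) N^{(Ec)} = 0$; (3) compute $N P N^{(Ec)}$ blockwise and identify it with $N$; (4) conclude $K N^{(Ec)} = N P N^{(Ec)} + N^{(Ec)} - P N^{(Ec)} = N$. Steps (1)–(2) are pure linear algebra on the elementary matrix; step (3) is where the structure of $N$ is used and where care is needed.
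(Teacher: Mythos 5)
Your argument starts from a mis-transcription of $K$ that the rest of the proof never recovers from. You set $P:=N^{(Ec)}(N^{(Ec)})^T$ and write $K=NP+(I-P)$, but the lemma defines $K=N(N^{(Ec)})^T+(I-P)$: the first summand is $N$ times the transpose itself, not $N$ times the projector $P$, and $N(N^{(Ec)})^T\neq NN^{(Ec)}(N^{(Ec)})^T$ in general. Your own computation then correctly shows that the \emph{wrong} $K$ yields $KN^{(Ec)}=NN^{(Ec)}$, which, as you notice, is not what is wanted --- indeed $NN^{(Ec)}$ has all of its superdiagonal blocks equal to zero, so it cannot equal $N$. That should have prompted a recheck of the decomposition rather than a blockwise reorganization of the same false identity. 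The repair in your second paragraph moreover contradicts your first paragraph: since $P$ is block diagonal and $N^{(Ec)}$ has nonzero blocks only in positions $(k,k+1)$, one gets $(NPN^{(Ec)})_{i,i+1}=N_{ii}P_{ii}N^{(Ec)}_{i,i+1}=0$ because $N_{ii}=0$, not $N_{i,i+1}$ as you claim. (Also, $P_{i+1,i+1}$ selects the first $\ell_{i+2}$ coordinates of block $i+1$, not the first $\ell_{i+1}$.)

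With the correct $K$ the statement is a two-line computation, which is exactly the paper's proof: $KN^{(Ec)}=N(N^{(Ec)})^TN^{(Ec)}+(I-P)N^{(Ec)}$. The second term vanishes because $PN^{(Ec)}=N^{(Ec)}$, which you did verify. For the first term the relevant projector is $Q:=(N^{(Ec)})^TN^{(Ec)}$, acting from the \emph{right}, not $P$: $Q$ is block diagonal with $Q_{11}=0$ and $Q_{jj}=I_{\ell_j}$ for $j=2,\ldots,\mu$, so right-multiplication of $N$ by $Q$ only annihilates the first block column of $N$, which is already zero because $N$ is strictly block upper triangular. Hence $NQ=N$ and $KN^{(Ec)}=N$. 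Note that neither the full-column-rank condition \eqref{N.col} nor the ordering $\ell_1\geq\cdots\geq\ell_\mu$ enters this identity; they are needed later (e.g.\ for the nonsingularity of $K$), not here.
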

\begin{proof}
By construction we have
	\[
	 N^{(Ec)}(N^{(Ec)})^T =\begin{bmatrix}
	I_{\ell_2} & 0  & \cdots  & 0 &  0 &0\\
	0 & 0 & & & & \\
	\vdots & & \ddots&  & \vdots & \vdots\\
	0 &  &  & I_{\ell_{\mu}} & 0 &  \\
	0 &0 & \ldots &  0 & 0 &  \\
	0 & & \ldots &   &  &  0\\
	 \end{bmatrix}, \quad (N^{(Ec)})^TN^{(Ec)} =\begin{bmatrix}
		0 & 0 \\
		 0 & \quad I_{\sum_{i=2}^{\mu} \ell_i}  
	
	 \end{bmatrix}.
	\]
Therefore,
\begin{eqnarray*}
K(N^{(Ec)})&=& (N(N^{(Ec)})^T + (I-N^{(Ec)}(N^{(Ec)})^T))(N^{(Ec)})\\
&=& N(N^{(Ec)})^T(N^{(Ec)}) =N.
\end{eqnarray*}
\end{proof}

To construct of the matrix functions $N^{(k)}$ we proceed as follows.

\begin{itemize}
\item[(1)] We define a nonsingular block upper  triangular matrix function $K^{(0)}$ accordingly to Lemma \ref{lem:construct_Kc}
\[
K^{(0)}:=N^{(0)}(N^{(Ec)})^T + (I-N^{(Ec)}(N^{(Ec)})^T).
\]

Note that constructing $K^{(0)}$ we rearrange  the columns of $N^{(0)}$ in such a way that all nonsingular $R_{i+1,i+1} \in \Real^{\ell_{i+1} \times \ell_{i+1}}$  from the secondary diagonal blocks $N^{(0)}_{i,i+1}$, $k=2, \ldots, \mu$, are in the diagonal and the remaining diagonal elements are one, cf.\ Figure \ref{fig:IntermediateSteps}. Therefore, for the diagonal blocks of $K^{(0)}$ it holds
\[
K^{(0)}_i = \begin{bmatrix}
	R_{i+1,i+1} & 0 \\
	0 & I_{\ell_{i}-\ell_{i+1}}
\end{bmatrix} :\mathcal I\rightarrow \Real^{ \ell_i \times \ell_i} , \quad i=1, \ldots, \mu-1
\]
and
\[
K^{(0)}_{\mu}=I_{m-\sum_{i=1}^{\mu-1}\ell_{i}},
\]
leading to
\[
(K^{(0)}_i)' = \begin{bmatrix}
	(R_{i+1,i+1})' & 0 \\
	0 & 0
\end{bmatrix}, \quad i=1, \ldots, \mu-1, \quad (K^{(0)}_{\mu})' =0 \in \R^{(m-\sum_{i=1}^{\mu-1}\ell_{i}) \times (m-\sum_{i=1}^{\mu-1}\ell_{i})}.
\]
By construction, $K^{(0)}$ is nonsingular, the inverse  $(K^{(0)})^{-1}$ has the same pattern than $K^{(0)}$ and due to Lemma \ref{lem:construct_Kc} it fulfills
\begin{eqnarray*}
(K^{(0)})^{-1}N^{(0)}=N^{(E_c)}.
\end{eqnarray*}

\begin{figure}[htbp]
    \begin{minipage}[b]{\textwidth}
        \includegraphics[width=\textwidth]{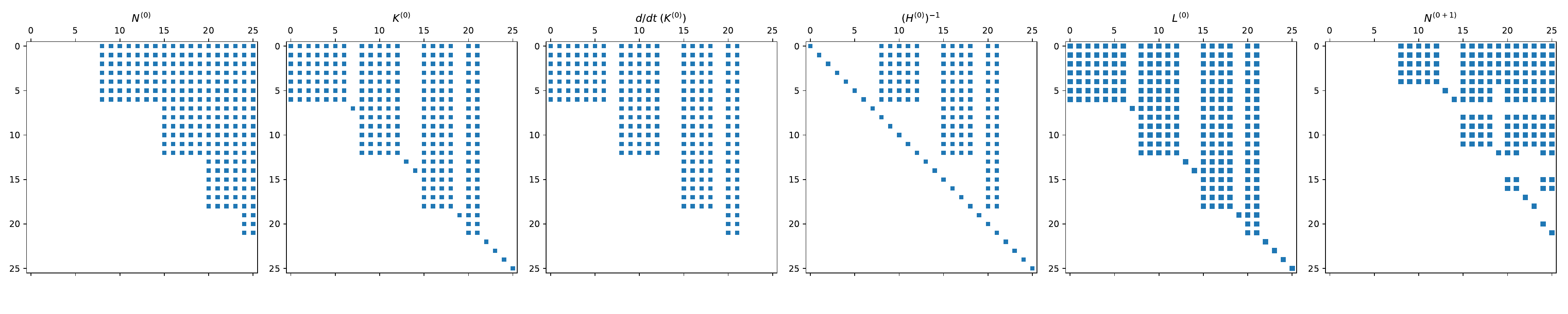}
    \end{minipage}\\
    \hfill
    \begin{minipage}[b]{\textwidth}
        \includegraphics[width=\textwidth]{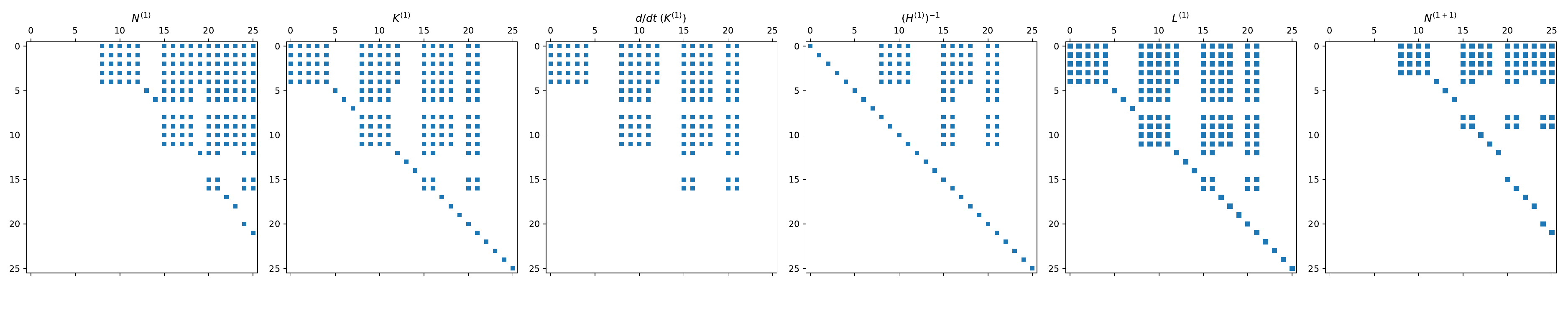}
    \end{minipage}\\
		\begin{minipage}[b]{\textwidth}
        \includegraphics[width=\textwidth]{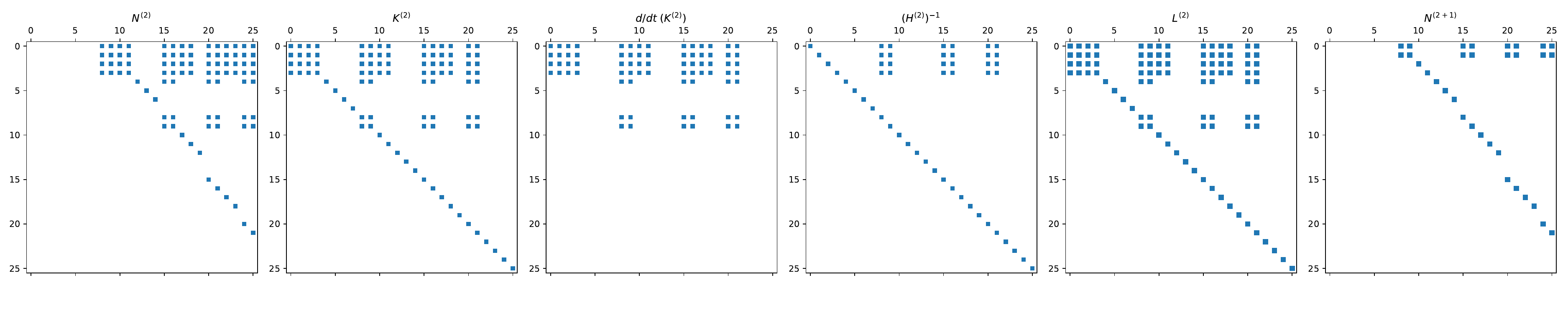}
    \end{minipage}\\
    \hfill
    \begin{minipage}[b]{\textwidth}
        \includegraphics[width=\textwidth]{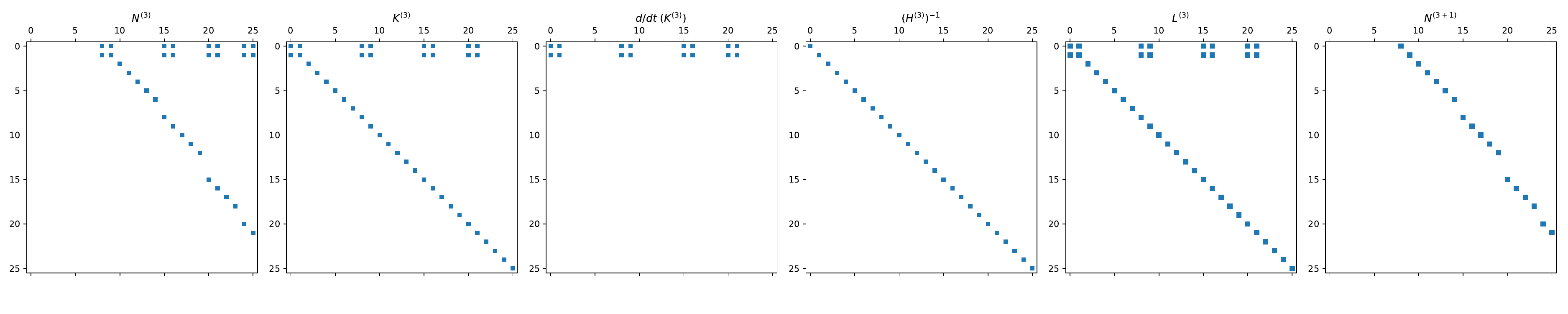}
    \end{minipage}\\
		\caption{Structure of matrices in intermediate steps in Section \ref{sec:Step1_col}}
    \label{fig:IntermediateSteps}
\end{figure}

Moreover, the particular structure of the blocks of $K^{(0)}$ imply important properties that are crucial  using  Lemma \ref{lemma_k_h_l} in this context, see Figure \ref{fig:IntermediateSteps}.
\begin{itemize}
\item For $N^{(0)}$: 
For the integer
\[
\kappa^{(0)}:=m-\sum_{i=1}^{\mu-1} \ell_{i}
\]
at least the last $\kappa^{(0)}$ rows of $N^{(0)}$ and $N^{(Ec)}$ coincide. Indeed, for $\ell_{\mu-1}> \ell_{\mu}$ also some more, but this is not crucial in later steps, so that for the sake of simplicity we will not emphasize it.
	\item For $K^{(0)}$: 
	At least the last $\kappa^{(0)}$  rows $K^{(0)}$  and $I$ coincide. Also some of the last columns, but the columns are not decisive in this context.
\item For $(K^{(0)})'$:\\ 
At least the last $\kappa^{(0)}$ rows of  $(K^{(0)})'$ are zero (as well as some others, that are not of importance for the steps below).
\item For $(K^{(0)})^{-1}N^{(0)} K^{(0)}  =N^{(E_c)}K^{(0)} \in SUT_{columns}$: \\
Note that on the one hand, at least the last $\kappa^{(0)}$ rows of $K^{(0)}$ and $I$ coincide.  On the other hand, the multiplication with $N^{(E_c)}$ from the left is a shift of blocks, such that at least
the last $\kappa^{(0)}+\ell_{\mu-1}$ rows  of $N^{(E_c)}K^{(0)}$ and  $N^{(E_c)}$ coincide.
 
\item For $(K^{(0)})^{-1}N^{(0)}(K^{(0)})' = N^{(E_c)}(K^{(0)})' \in SUT$:\\
This expression is crucial for non-constant coefficients. In this case, at least the last $\kappa^{(0)}$ rows of $(K^{(0)})'$ are zero.  Again, the multiplication with $N^{(E_c)}$ from the left is a shift of blocks, such that at least
the last $\kappa^{(0)}+\ell_{\mu-1}$ rows of $N^{(E_c)}(K^{(0)})'$ are zero.
\item For $H^{(0)}=I+(K^{(0)})^{-1}N^{(0)}(K^{(0)})'$:\\
 At least the last  $\kappa^{(0)}+\ell_{\mu-1}$ rows of $H^{(0)}$ and $I$ coincide. $H^{(0)}$ is nonsingular.
\item For $(H^{(0)})^{-1}N^{(E_c)}$:\\
 Since the last $\kappa^{(0)}+\ell_{\mu-1}$ rows of the matrix  $\bar{N}^{(0)} :=-(K^{(0)})^{-1}N^{(0)}(K^{(0)})' \in SUT$ are zero,  the last $\kappa^{(0)}+\ell_{\mu-1}$ rows of 
\[
(H^{(0)})^{-1}N^{(E_c)} =(I-\bar{N}^{(0)})^{-1} N^{(Ec)} = N^{(Ec)}+ \bar{N}^{(0)} N^{(Ec)} + \cdots + (\bar{N}^{(0)})^{\mu-1} N^{(Ec)},
\]
coincide with that of $N^{(Ec)}$.
\end{itemize}

Let us consider now $N^{(1)}=\hat{E}=L^{(0)}EK^{(0)}$ for $E=N^{(0)}$, $L^{(0)}=(H^{(0)})^{-1}((L^{(0)})^{-1}$ from Lemma  \ref{lemma_k_h_l}:
\begin{eqnarray*}
N^{(1)} &=& (H^{(0)})^{-1}(K^{(0)})^{-1}N^{(0)} K^{(0)} = (H^{(0)})^{-1}N^{(E_c)} K^{(0)}\\
 &=& N^{(Ec)}K^{(0)}+ (\bar{N}^{(0)} N^{(Ec)} + \cdots + (\bar{N}^{(0)})^{\mu-1} N^{(Ec)})K^{(0)},
\end{eqnarray*}
with the blocks
\begin{eqnarray*}
(N^{(1)})_{i,i+1}&=&(N^{(Ec)}K^{(0)})_{i,i+1} =\begin{bmatrix}
	I_{\ell_{i+1}}\\
	0
\end{bmatrix}\begin{bmatrix}
	R_{i+2,i+2} & 0 \\
	0 & I_{\ell_{i+1}-\ell_{i+2}}
\end{bmatrix}\\
 &=& \begin{bmatrix}
	R_{i+2,i+2} & 0  \\
	0 & I_{\ell_{i+1}-\ell_{i+2}}  \\
	0 & 0 
\end{bmatrix} :\mathcal I\rightarrow \Real^{ \ell_i \times \ell_{i+1}} , \quad i=1, \ldots, \mu-2,
\end{eqnarray*}
that have full column rank, and
\[
(N^{(1)})_{\mu-1,\mu} =  \begin{bmatrix}
	I_{\ell_{\mu}}   \\
	0 
\end{bmatrix}=(N^{(Ec)})_{\mu-1,\mu}.
\]

Summarizing, we constructed $N^{(1)}\in SUT_{columns}$ such that
\[
\{N^{(0)}, I \} \quad \sim \quad  \{N^{(1)}, I \}
\]
and at least the last  
\begin{eqnarray*}
\kappa^{(1)}&:=&\kappa^{(0)} +\ell_{\mu-1} =m-\sum_{i=1}^{\mu-2} \ell_{i}
\end{eqnarray*}
rows of $N^{(1)}$ and $N^{(Ec)}$ coincide.

\item[(2)] For $k=1, \ldots, \mu-1$, we repeat this procedure starting with $N^{(k)}$. Since at least the last $\kappa^{(k)}$ rows of  $N^{(k)}$ and $N^{(Ec)}$ coincide, for 
	\[
	K^{(k)} =N^{(k)}(N^{(Ec)})^T + (I-N^{(Ec)}(N^{(Ec)})^T) 
	\]
	at least the last 
	\[
	\kappa^{(k)}=m-\sum_{i=1}^{\mu-(k+1)} \ell_{i}
	\]
 columns and rows of $K^{(k)}$ and $I$ coincide and we can proceed analogously as in the first step, obtaining
a nilpotent matrix $N^{(k+1)}=L^{(k)}N^{(k)}K^{(k)} \in SUT_{columns}$ with the property that at least the last
\begin{eqnarray*}
\kappa^{(k+1)}&:=&\kappa^{(k)} + \ell_{\mu-k}
=  m-\sum_{i=1}^{\mu-(k+2)} \ell_{i}
\end{eqnarray*}
rows of $N^{(k+1)}$ and $N^{(E_c)}$ coincide. Therefore, $N^{(\mu-1)}= N^{(E_c)}$.
\end{itemize}

Note that we generated step by step zeros above all secondary diagonal blocks due to two effects:
\begin{itemize}
	\item On the one hand, step by step the  $N^{(Ec)}K^{(k-1)}$ will contain zeros columns and rows from bottom right to top left, cf.\ Figure \ref{fig:Step_dot_zero}.
	\item On the other hand, $\bar{N}^{(k-1)}$ explains that some right columns
  of $N^{(k)}$ and $N^{(Ec)}$  do not coincide for $N' \neq 0$. However,  these entries will also become zero from bottom to top with the iterations, cf.\ the difference between Figures \ref{fig:Step_dot_zero} and \ref{fig:Steps_dot_not_zero}.
\end{itemize}

\section{Proof with full-row rank assumption} \label{sec:rows}

If for integers $ \ell_{1} \leq \ldots \leq \ell_{\mu}$ we define elementary matrices from $SUT_{rows}$ by
\[
N^{(E_r)} =
	\begin{bmatrix}
   0&N^{(E_r)}_{12}&0&\cdots&0 \\
   &0&N^{(E_r)}_{23}& 0&0\\
   &&\ddots&\ddots&\vdots\\
   &&&&N^{(E_r)}_{\mu-1, \mu}\\
   &&&&0
   \end{bmatrix} \in \Real^{m\times m}
\]
with blocks
\begin{equation}
N^{(E_r)}_{i,i+1} = \begin{bmatrix}
	0 & I_{\ell_{i}} 
\end{bmatrix} \in \Real^{\ell_{i} \times \ell_{i+1}}. \label{BlocksIrow}
\end{equation}
This structure is visualized in Figure \ref{fig:Powers_Erows}.
\begin{figure}
\includegraphics[width=0.8\textwidth]{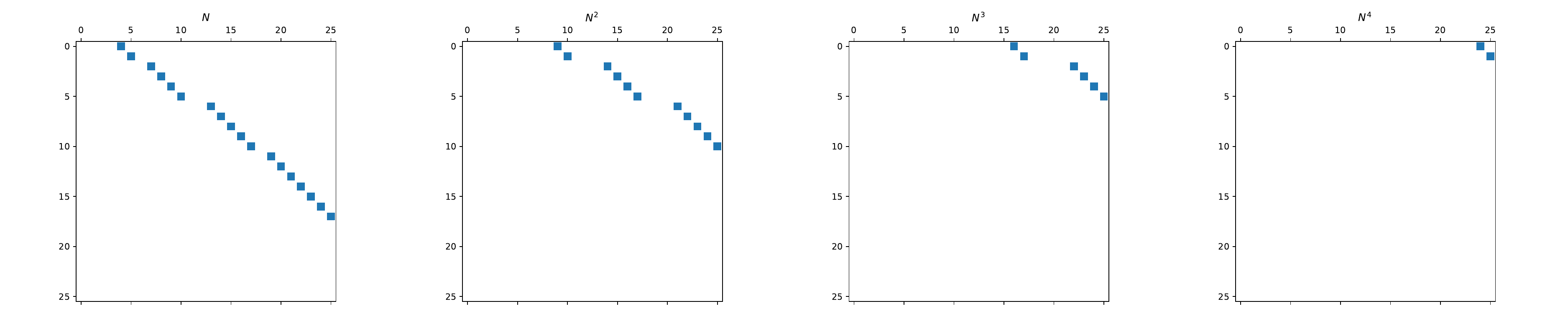}
\caption{Powers of $N=N^{(Er)}$ with $N^5=0$ for $N$ with full row-rank secondary blocks, $m=26$,  $r=18$, $\theta_0= 7,  \theta_1= 5,  \theta_2= 4,  \theta_3=2$.}
\label{fig:Powers_Erows}
\end{figure} 

\begin{proposition}\label{pro:N_row}
For every matrix function $N \in SUT_{rows}$ it holds

\[
\{N,I\} \quad \sim \quad \{N^{(E_r)},I \}.
\]
\end{proposition}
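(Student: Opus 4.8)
The plan is to mirror the column-rank argument of Section~\ref{sec:cols}, transposing every structural statement. Recall that Proposition~\ref{pro:N_col} was proved by (Step~0) using pointwise smooth SVDs to bring the secondary blocks into the shape $\begin{bmatrix} R_{i+1,i+1} \\ 0\end{bmatrix}$ with $R$ nonsingular, and then (Step~1) iteratively permuting columns via the $K^{(k)}$ of Lemma~\ref{lem:construct_Kc} so that more and more rows of the iterate agree with $N^{(E_c)}$, using Lemma~\ref{lemma_k_h_l} to pass between $\{N^{(k)},I\}$ and $\{N^{(k+1)},I\}$. For the row case, the roles of rows and columns are swapped: now $\ell_1 \le \cdots \le \ell_\mu$, the blocks $(N)_{i,i+1}$ have full row rank $\ell_i$, and the target block $N^{(E_r)}_{i,i+1}=\begin{bmatrix}0 & I_{\ell_i}\end{bmatrix}$ sits at the right end of the block. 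So I would first state and prove the analogue of Lemma~\ref{lem:construct_Kc}: for $N\in SUT_{rows}$, setting
\[
K := (N^{(E_r)})^T N + \big(I - (N^{(E_r)})^T N^{(E_r)}\big),
\]
one checks $N^{(E_r)}\,K = N$, since $(N^{(E_r)})^T N^{(E_r)}$ is a projector onto the coordinates carrying the identity blocks and $N^{(E_r)}(N^{(E_r)})^T N = N$ by the row-shift structure. Note that here the multiplier acts from the \emph{right} on $N^{(E_r)}$ (equivalently, $K$ is a block \emph{lower}-triangular column-operation from the transposed viewpoint), so the bookkeeping about which rows stay fixed becomes bookkeeping about which \emph{columns} stay fixed.

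Next, Step~0 for rows: apply the SVD $N_{i,i+1}=U_i\Sigma_i V_i^T$ and conjugate by the block-diagonal orthogonal $B_V$ built from $V_1,\dots,V_{\mu-1}$ (and $I_{\ell_1}$ in the first slot), exactly as in Section~\ref{sec:cols} but using $B_V$ in place of $B_U$; since $B_V^T N (B_V)'\in SUT$, Lemma~\ref{lemma_k_h_l} applies and produces $N^{(0)}\in SUT_{rows}$ with $(N^{(0)})_{i,i+1}=\begin{bmatrix}0 & R_{i,i}\end{bmatrix}$ for nonsingular $R_{i,i}$ of size $\ell_i\times\ell_i$ — the full-row-rank analogue of the Step~0 normal form. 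Then Step~1 is the iteration: define $K^{(k)} := (N^{(E_r)})^T N^{(k)} + \big(I-(N^{(E_r)})^T N^{(E_r)}\big)$, which is nonsingular block triangular, satisfies $(K^{(k)})^{-1}N^{(k)} = $ (up to the $R$-scaling of columns) $N^{(E_r)}$, and whose derivative $(K^{(k)})'$ has a controlled zero pattern. Invoking Lemma~\ref{lemma_k_h_l} with this $K^{(k)}$ gives $N^{(k+1)}=L^{(k)}N^{(k)}K^{(k)}\in SUT_{rows}$, and one tracks the growth of the set of \emph{columns} on which $N^{(k)}$ already agrees with $N^{(E_r)}$ (the counterpart of the $\kappa^{(k)}$ counting in Section~\ref{sec:cols}, now counting leading columns that are zero and hence stabilized). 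After $\mu-1$ steps one reaches $N^{(\mu-1)}=N^{(E_r)}$, and composing all the transformations yields $\{N,I\}\sim\{N^{(E_r)},I\}$.

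The main obstacle — and the only place where the row case is genuinely different rather than a formal transpose — is the derivative term $H^{(k)}=I+(K^{(k)})^{-1}N^{(k)}(K^{(k)})'$ and the propagation of the zero pattern through $(H^{(k)})^{-1}N^{(E_r)}$. In Section~\ref{sec:cols} the key was that multiplying by $N^{(E_c)}$ on the left shifts \emph{block rows} upward, so the bottom rows of $N^{(E_c)}(K^{(k)})'$ stay zero and one gains $\ell_{\mu-k}$ additional stabilized rows per step. In the row case, $N^{(E_r)}$ on the left shifts block rows the same way, but the zero pattern one must preserve lives in the \emph{columns} (the leading $\sum_{i\le j}\ell_i$ columns), because $N^{(E_r)}$ places its identities at the right of each block row; so I would need to verify carefully that $K^{(k)}$ and $(K^{(k)})'$ have their identity part / zero part arranged so that the same shift argument makes the leading columns of $N^{(k+1)}$ coincide with those of $N^{(E_r)}$, with the count increasing by $\ell_{k+1}$ each step. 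Once that column-bookkeeping lemma is nailed down — essentially checking that in $K^{(k)}$ the block structure forces the relevant leading columns to equal those of $I$ and that left-multiplication by $N^{(E_r)}$ followed by the Neumann series for $(H^{(k)})^{-1}$ cannot destroy them — the rest is the same finite induction as before, and I would simply say ``the proof proceeds as in Section~\ref{sec:cols}, mutatis mutandis, interchanging the roles of rows and columns.''
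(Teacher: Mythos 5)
Your plan coincides with the paper's own (outlined) direct proof: the same auxiliary lemma for $K$ (the paper's Lemma \ref{lem:construct_Kr}, identical to your formula), the same SVD-based Step~0 with the block-diagonal $B_V$, and the same left-to-right stabilization of columns in Step~1, so in substance you have reproduced the intended argument. Two details need correction, however. First, conjugating by $B_V$ yields secondary blocks of the form $\begin{bmatrix}\tilde R_{i,i} & 0\end{bmatrix}$ (the SVD puts the nonsingular part in the leading positions), so an additional block permutation is required to arrive at $\begin{bmatrix}0 & R_{i,i}\end{bmatrix}$; the paper inserts explicit permutations $P_i$ for this. Second, and more importantly, your lemma gives $N^{(E_r)}K^{(k)}=N^{(k)}$, i.e.\ $N^{(k)}\bigl(K^{(k)}\bigr)^{-1}=N^{(E_r)}$ — the normalization acts from the \emph{right}. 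Hence the coordinate change to feed into Lemma \ref{lemma_k_h_l} must be $\bigl(K^{(k)}\bigr)^{-1}$, not $K^{(k)}$: with your literal choice one gets $N^{(k)}K^{(k)}=N^{(E_r)}\bigl(K^{(k)}\bigr)^{2}$ and the iteration does not close up, and your intermediate claim ``$\bigl(K^{(k)}\bigr)^{-1}N^{(k)}=N^{(E_r)}$ up to scaling'' has the factors on the wrong side. This is the precise sense in which the row case runs ``the other way around'': in Section \ref{sec:cols} the normalization is performed by the forced left factor $L\approx K^{-1}$, whereas here it is performed by the coordinate change itself, and the derivative term then lands on the left, where the column bookkeeping you describe must be carried out. (You could also have noted, as the paper does, that the statement already follows from Proposition \ref{pro:N_col}, since both elementary forms are permutation-similar to the same Jordan matrix.)
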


Of course, this proposition follows already from Proposition \ref{pro:N_col}. But for completeness, here we also outline the steps of a direct proof.

\begin{proof}
Analogously as in the last section, for the proof we first  make a rearrangement in an initial step
\[
\{N,I\} \quad \sim \quad \{N^{(0)},I\}
\]
and perform then  a finite iteration
\[
  \{N^{(0)},I \}  \quad \sim \quad \{N^{(1)},I \}  \quad \sim \quad \ldots \quad \sim \quad \{N^{(\mu -1)},I \}  \quad =    \quad \{N^{(E_c)},I\}.
\] 
These steps are analogous to the ones in the last section, but in a certain sense the other way around. We will only present the main differences.
\end{proof}

\subsection{ \textbf{Step 0:} Obtaining quadratic nonsingular blocks by smooth  SVD}
The aim of this step is to construct a matrix function $N^{(0)} \in SUT_{columns}$ such that 
	\[
 \{N,I\} \quad \sim \quad \{N^{(0)},I \} \quad \mbox{and} \quad 	N^{(0)}_{i,i+1} =\begin{bmatrix}
		0 & R_{i,i}
	\end{bmatrix}:\mathcal I\rightarrow \Real^{\ell_{i}\times \ell_{i+1} }
	\]
	for nonsingular matrix functions $R_{i,i}$, $i=1, \ldots, \mu-1$ . \\

Using the singular value decomposition we define
\begin{itemize}
	\item $N_{i,i+1}  = U_{i} \Sigma_i V_{i}^T$, 
	\item $B_V$: Block diagonal matrix function containing $I_{\ell_{1}}, V_2, \ldots, V_{\mu}$ with $B_V^{-1}=B_V^T$.
\end{itemize}
and obtain secondary diagonal blocks
\[
(\hat{N})_{i,i+1}=V_{i-1}^TU_{i} \Sigma_i V_{i}^TV_i=
\begin{bmatrix}
	 \tilde{R}_{i,i} & 0
	\end{bmatrix}.
\]
Therefore, we perform a further equivalence transformation with a block matrix containing the permutation matrices $P_{i}$ that invert the order of the columns and rows inside the blocks to obtain $N^{(0)}$:
\[
P_{i}\begin{bmatrix}
	 \tilde{R}_{i,i} & 0
	\end{bmatrix} P_{i+1}=: \begin{bmatrix}
	 0 & R_{i,i} 
	\end{bmatrix}.
\]

\begin{figure}[htbp]
    \begin{minipage}[b]{\textwidth}
        \includegraphics[width=\textwidth]{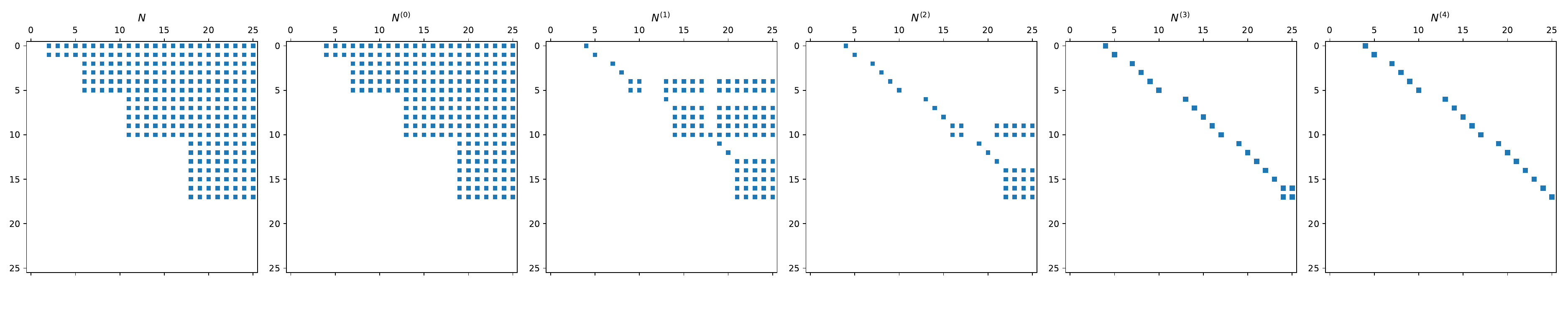}
        \caption{Visualization of  $N^{(k)}=L^{(k-1)}N^{(k-1)}K^{(k-1)}$ for $ \mu$ steps starting from $N$ (left) and ending with $N^{(Er)}$ for  $N'= 0$.}
        \label{fig:Step_dot_zero_rows}
    \end{minipage}\\
		\begin{minipage}[b]{\textwidth}
        \includegraphics[width=\textwidth]{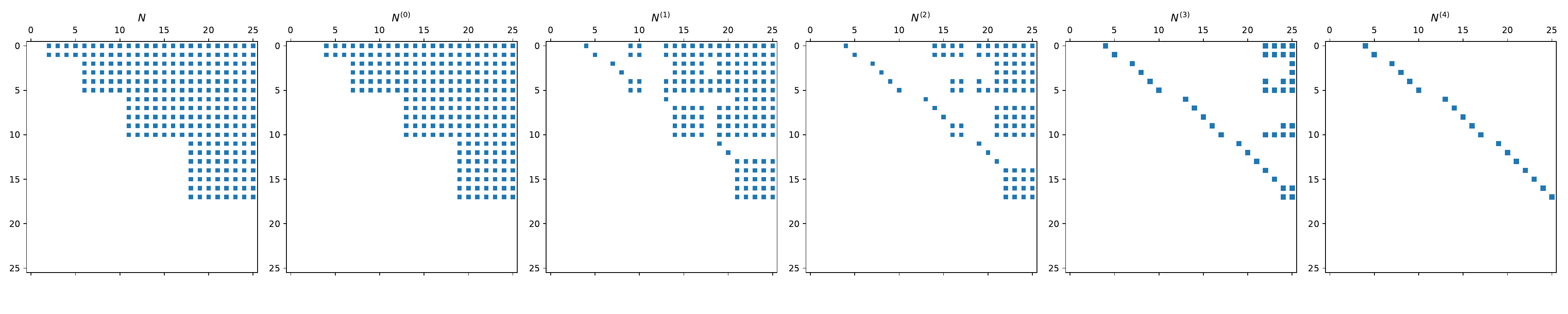}
        \caption{Visualization of  $N^{(k)}=L^{(k-1)}N^{(k-1)}K^{(k-1)}$ for $ \mu$ steps starting from $N$ (left) and ending with $N^{(Er)}$ for $N'\neq 0$.}
        \label{fig:Steps_dot_not_zero_rows}
    \end{minipage}
\end{figure}

\subsection{ \textbf{Step 1:} Obtaining $N^{(E_r)}$  }
The aim of this step is  to construct $N^{(E_r)} \in SUT_{columns}$ by a finite iteration such that  
\[
 \quad \{N^{(0)},I \}  \quad \sim \quad \{N^{(i)},I \}  \quad \sim \quad \ldots \quad \sim \quad \{N^{(\mu-1 )},I \}  \quad =    \quad \{N^{(E_r)},I\}.
\]

In this case, we will use the following lemma several times.
\begin{lemma}\label{lem:construct_Kr}
	For any $N \in SUT_{rows}$, a corresponding $N^{(Er)}\in SUT_{rows}$ and 
	
	\[
	K:=(N^{(Er)})^TN + (I-(N^{(Ec)})^T N^{(Ec)}) 
	\]
	it holds
	\[
	(N^{(Er)})K=N.
	\]
\end{lemma}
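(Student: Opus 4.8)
The plan is to mirror Lemma \ref{lem:construct_Kc}, replacing the column-pivoting with row-pivoting. Recall that $N \in SUT_{rows}$ means the secondary blocks $N_{i,i+1} \in \Real^{\ell_i \times \ell_{i+1}}$ have full row rank $\ell_i$, with $\ell_1 \leq \cdots \leq \ell_\mu$; and the elementary matrix $N^{(Er)}$ has secondary blocks $N^{(Er)}_{i,i+1} = \begin{bmatrix} 0 & I_{\ell_i} \end{bmatrix} \in \Real^{\ell_i \times \ell_{i+1}}$, placing the identity in the \emph{last} $\ell_i$ columns. So the natural object to premultiply $N$ by is something built from $(N^{(Er)})^T$, and the claim $(N^{(Er)})K = N$ (multiplication from the \emph{right} on $K$, which here mirrors the left-multiplication role in the column case by transposition) is the row-analogue of $K N^{(Ec)} = N$.

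First I would record the two ``projector'' identities that do all the work, exactly analogous to those displayed in the proof of Lemma \ref{lem:construct_Kc}. From the block form of $N^{(Er)}$ one computes
\[
(N^{(Er)})^T N^{(Er)} = \diag\!\Bigl(0_{\ell_1},\ \begin{bmatrix} 0 & 0 \\ 0 & I_{\ell_1} \end{bmatrix},\ \ldots,\ \begin{bmatrix} 0 & 0 \\ 0 & I_{\ell_{\mu-1}} \end{bmatrix}\Bigr),
\]
a block-diagonal orthogonal projector, and
\[
N^{(Er)} (N^{(Er)})^T = \diag\!\bigl(I_{\ell_1},\ \ldots,\ I_{\ell_{\mu-1}},\ 0_{\ell_\mu}\bigr),
\]
i.e.\ the orthogonal projector onto the first $\sum_{i=1}^{\mu-1}\ell_i$ coordinates. (Here I am reading the statement's $N^{(Ec)}$ as a typo for $N^{(Er)}$, consistent with the column-case lemma; I would flag and fix this in the write-up.)

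Then the computation is a one-liner. With $K := (N^{(Er)})^T N + \bigl(I - (N^{(Er)})^T N^{(Er)}\bigr)$, left-multiply by $N^{(Er)}$:
\[
N^{(Er)} K = N^{(Er)}(N^{(Er)})^T N + N^{(Er)}\bigl(I - (N^{(Er)})^T N^{(Er)}\bigr).
\]
For the first term, $N^{(Er)}(N^{(Er)})^T N = N$ because $N^{(Er)}(N^{(Er)})^T$ is the projector onto the first $\sum_{i=1}^{\mu-1}\ell_i$ rows and $N \in SUT$ has its last $\ell_\mu$ rows zero already (the $\mu$-th block row of any matrix in $SUT(\mu,\ell_1,\ldots,\ell_\mu)$ vanishes). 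For the second term, $N^{(Er)} - N^{(Er)}(N^{(Er)})^T N^{(Er)} = N^{(Er)} - N^{(Er)} = 0$ since $(N^{(Er)})^T N^{(Er)}$ acts as the identity on the row space of $N^{(Er)}$ (equivalently, $N^{(Er)}(N^{(Er)})^T N^{(Er)} = N^{(Er)}$ is the standard partial-isometry identity, which one checks block-diagonally using the two displays above). Hence $N^{(Er)} K = N$, as asserted.

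The only genuine subtlety — and the step I would be most careful about — is the bookkeeping of which block rows/columns the two projectors annihilate, because the block sizes are \emph{increasing} ($\ell_1 \leq \cdots \leq \ell_\mu$) rather than decreasing, so the identity blocks in $N^{(Er)}_{i,i+1} = [\,0\ \ I_{\ell_i}\,]$ sit in the trailing columns and the argument that $(N^{(Er)})^T N^{(Er)}$ restricts correctly has to be traced block by block rather than quoted from the column case verbatim. Everything else is a routine transposed copy of Lemma \ref{lem:construct_Kc}; one also notes, as there, that $K$ is pointwise nonsingular (it is block upper/lower triangular with identity and permutation-of-nonsingular diagonal blocks after Step 0 has normalized the secondary blocks to $[\,0\ \ R_{i,i}\,]$ with $R_{i,i}$ nonsingular), which is what allows its use inside Lemma \ref{lemma_k_h_l} in the subsequent iteration.
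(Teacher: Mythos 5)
Your proposal is correct and follows essentially the same route as the paper's proof: compute the two projector identities for $(N^{(Er)})^T N^{(Er)}$ and $N^{(Er)}(N^{(Er)})^T$, then verify $N^{(Er)}K=N$ in one line using that the last block row of $N$ vanishes and that $N^{(Er)}(N^{(Er)})^T N^{(Er)}=N^{(Er)}$. You are also right that the occurrences of $N^{(Ec)}$ in the statement are typos for $N^{(Er)}$, exactly as the paper's own proof implicitly treats them.
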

\begin{proof}
By construction we have
	\[
	 (N^{(Er)})^T N^{(Er)}=\begin{bmatrix}
	0 &   & \cdots  &  &   &0\\
	& 0 & 0  & \cdots  & 0 &   \\
	& 0 & I_{\ell_1} & & 0&    \\
	\vdots & & \ddots&  &   \vdots \\
	& 0 &  &  & 0 & 0   \\
	0& 0 &0 & \ldots &  0 & I_{\ell_{\mu-1}} \\
	
	 \end{bmatrix}, \quad N^{(Er)}(N^{(Er)})^T =\begin{bmatrix}
		 I_{\sum_{i=1}^{\mu-1} \ell_i} & 0 \\
		 0 & 0
	 \end{bmatrix}.
	\]
Therefore,
\begin{eqnarray*}
N^{(Ec)}K&=&N^{(Ec)}((N^{(Er)})^TN + (I-(N^{(Ec)})^T N^{(Ec)})) \\
&=&N^{(Ec)}((N^{(Er)})^TN =N.
\end{eqnarray*}
\end{proof}

This time, we start from the fact that the first columns of $N^{(0)}$ and $N^{(E_r)}$ coincide and construct a series of $N^{(k)}$ 
that becomes $N^{(E_r)}$ from left to right, see Figures \ref{fig:Step_dot_zero_rows} and \ref{fig:Steps_dot_not_zero_rows}, using again Lemma \ref{lemma_k_h_l}.
Indeed, for constructing $K^{(k)}$ we rearrange  the rows of $N^{(k)}$ in such a way that all nonsingular $R_{i,i}$ are in the diagonal and the remaining diagonal elements are one.

\section{Conclusion}\label{sec:sum}

The conjecture is proven, since for a matrix $J$ in Jordan form with the blocks described in Conjecture \ref{conjecture} there exist permutation matrices $P_c$ and $P_r$ such that
\[
J=P_cN^{(Ec)}P_c^T = P_rN^{(Er)}P_r^T.
\]
Therefore, shortly written, $\{E,F\}$ is regular iff
\[
\{E,F\} \ \sim \ \left\{ \begin{bmatrix}
	I & 0 \\
	0 & N^{(Ec)}
\end{bmatrix}, \begin{bmatrix}
	\Omega & 0 \\
	0 & I
\end{bmatrix} \right\} \ \sim \ \left\{ \begin{bmatrix}
	I & 0 \\
	0 & N^{(Er)}
\end{bmatrix}, \begin{bmatrix}
	\Omega & 0 \\
	0 & I
\end{bmatrix} \right\} \ \sim \ \left\{ \begin{bmatrix}
	I & 0 \\
	0 & J
\end{bmatrix}, \begin{bmatrix}
	\Omega & 0 \\
	0 & I
\end{bmatrix} \right\}.
\]

Even more generally, $\{E,F\}$ is regular iff 
\[
\{E,F\} \ \sim \ \left\{ \begin{bmatrix}
	I_d & 0 \\
	0 & N
\end{bmatrix}, \begin{bmatrix}
	\Omega & 0 \\
	0 & I_{n-d}
\end{bmatrix} \right\} 
\]
for a constant $r=\rank E$ and a constant nilpotent matrix $N \in \Real^{(n-d)\times (n-d)}$ of index $\mu$ with
\[
\theta_i=\rank N^{i+1} - \rank N^{i+2}, \quad i=0, \ldots, \mu-2, \quad d=r-\sum_{i=0}^{\mu-2} \theta_i.
\]


\bibliography{Regularity-SSCF_DAEs}{}
\bibliographystyle{plain}

\end{document}